\newtheorem{theorem}{Theorem}
\newtheorem{lemma}{Lemma}
\newtheorem{question}{Question}
\begin{document}
\title{ Pencil of irreducible rational curves and\\ Plane Jacobian conjecture}
\author{Nguyen Van Chau\thanks{Supported in part by NAFOSTED, Vietnam
and Tokyo University of Sciences, Tokyo, Japan.}}

\date{}

\maketitle

\begin{abstract}
We are concerned with the behavior of the polynomial maps
$F=(P,Q)$ of $\mathbb{C}^2$ with finite fibres and satisfying the
condition that all of the curves $aP+bQ=0$, $(a:b)\in
\mathbb{P}^1$, are irreducible rational curves. The obtained
result shows that such polynomial maps $F$ is invertible if
$(0,0)$ is a regular value of $F$ or if the Jacobian condition
holds.

{\it Keywords and Phrases:} Plane Jacobian conjecture, Polynomial
automorphism, Pencil of irreducible rational curves.

{\it 2000 Mathematical Subject Classification:} Primary 14R15,
14R25; Secondary 14E20.

\end{abstract}
\noindent {\bf 1.} The mysterious  Jacobian Conjecture (see
\cite{Bass} and \cite{Essen-book} for its history and surveys),
posed first by Ott-Heinrich Keller \cite{Keller} since 1939 and
remains open even for the case $n=2$, asserts that every
polynomial map $F$ of $\mathbb{C}^n$  satisfying the Jacobian
condition $\det DF\equiv const.\neq 0$ is invertible, and hence,
is a polynomial automorphism of $\mathbb{C}^n$. The following
results, which appeared in the literature in some convenient
statements, characterize the invertibility of non-zero constant
Jacobian polynomial maps $F$ in terms of the topology of inverse
images $F^{-1}(l)$ of the complex lines $l\subset \mathbb{C}^n$,

\begin{theorem} Let $F$ be a polynomial
 map of \ $\mathbb{C}^n$ with non-zero constant Jacobian, $\det DF\equiv const.\neq 0
$. Then,
 \begin{enumerate}

\item[i)] $F$ is invertible if the inverse images $F^{-1}(l)$ of
complex lines $l\subset\mathbb{C}^n$ having same a fixed direction
are irreducible rational curves, and

\item[ii)] $F$ is invertible if  for generic point $q\in
\mathbb{C}^n$ the inverse images $F^{-1}(l)$ of complex lines
$l\subset\mathbb{C}^n$ passing through $q$ are irreducible
rational curves.

\end{enumerate}
\end{theorem}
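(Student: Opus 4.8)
The plan is to reduce the invertibility of $F$, in both cases and for all $n$, to a single geometric assertion: that each of the relevant preimage curves is isomorphic to the affine line $\mathbb{C}$. Granting this, the nonzero constant Jacobian forces $F$ to restrict to an isomorphism on each such curve, which yields injectivity of $F$; an injective polynomial endomorphism of $\mathbb{C}^n$ is a polynomial automorphism by the theorem of Bialynicki-Birula and Rosenlicht, so $F$ is invertible. I emphasize that the outer argument is carried out directly in dimension $n$, with no reduction to the plane.

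For the set-up, note first that $\det DF$ constant and nonzero makes $F$ étale everywhere (a local biholomorphism at every point), so $dF_1,\dots,dF_n$ form a coframe on all of $\mathbb{C}^n$. In case (i), after a linear change of coordinates on the target (which only multiplies the Jacobian by a nonzero constant) I may assume the fixed direction is $e_n$; the lines of the family are then the fibres of $(y_1,\dots,y_n)\mapsto(y_1,\dots,y_{n-1})$, so $C_a:=F^{-1}(l_a)$ is the fibre over $a$ of $G:=(F_1,\dots,F_{n-1})$. Since $dF_1\wedge\cdots\wedge dF_{n-1}\neq0$ everywhere, $G$ is a submersion and each $C_a$ is a smooth affine curve. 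In case (ii) each $C_v:=F^{-1}(l_v)$ is the preimage of the smooth curve $l_v$ under the étale map $F$, hence is again smooth. In both cases the hypothesis says $C$ is irreducible and rational, so $C\cong\mathbb{P}^1\setminus S$ for a nonempty finite set $S$ of points at infinity, and $\varphi:=F|_C$, viewed as a map to the line $\cong\mathbb{C}$, is unramified because $F$ is a local biholomorphism.

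Granting the key point $|S|=1$, i.e. $C\cong\mathbb{C}$, the theorem follows at once. Extend $\varphi$ to $\bar\varphi\colon\mathbb{P}^1\to\mathbb{P}^1$ of degree $d$; since $C$ maps into the finite part $\mathbb{C}$ and $\varphi$ is unramified there, all ramification of $\bar\varphi$ sits over the single point $p$ of $S$, so Riemann--Hurwitz gives $e_p-1=2d-2$, while $e_p\le d$ forces $d=1$. Thus $\varphi\colon C\to\mathbb{C}$ is an isomorphism, in particular injective. In case (i), two points with the same $F$-image lie on a common fibre $C_a$ of $G$ and share the value of $F_n$, hence coincide. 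In case (ii), every $y\neq q$ lies on a unique line $l_v$ through $q$, so $F^{-1}(y)\subset C_v$ is a single point, and likewise $F^{-1}(q)$ is a single point. Either way $F$ is injective, and the cited theorem makes it an automorphism.

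The hard part is exactly the claim that every member of the family has a single place at infinity. This cannot follow from one curve in isolation: a smooth rational affine curve may have several points at infinity (e.g. $\mathbb{C}^{*}$), and the unramified degree-$2$ map $\mathbb{C}^{*}\to\mathbb{C}$, $t\mapsto t^{2}$, shows $|S|=1$ can fail for a single curve, so the global structure must enter. My proposed mechanism is the polynomial vector field $X:=\tfrac{1}{c}\,\mathrm{adj}(DF)\,e_n$ (polynomial since $\det DF=c$ is constant), which is tangent to every $C_a$ and satisfies $dF_n(X)\equiv1$; its flow translates $F_n$ along each fibre, so completeness of $X$ on $C_a$ would present $\mathbb{C}$ acting with $F_n$ as a global coordinate and give $C_a\cong\mathbb{C}$. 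Incompleteness means finite-time escape, i.e. a point at infinity of some $C_a$ mapping to a finite value, which is precisely a point of the non-properness locus of $F$; hence the claim is equivalent to properness of $F$. The remaining task is to exclude non-properness from the rationality of the whole family: by Jelonek's theorem the non-properness locus, if nonempty, is a uniruled hypersurface swept out by limits of the escaping branches, and one must show this is incompatible with every member of the family being irreducible and rational. In case (ii) the curves $C_v$ share the base points $F^{-1}(q)$ and so form a genuine pencil of irreducible rational curves through fixed points; resolving this pencil and analysing its fibration structure is the natural route to the single-place-at-infinity statement, and is where the combinatorics of the rational pencil does the real work.
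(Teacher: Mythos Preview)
First, a framing remark: the paper does not give its own proof of Theorem~1. It is stated as a known result and attributed to the literature---part (i) to Razar, L\^e--Weber, Friedland, Heitmann (for $n=2$) and N\'emethi--Sigray (general $n$), and part (ii) to Nollet--Xavier via their holomorphic injectivity theorem. So there is no in-paper argument to compare your proposal against; the question is simply whether your proposal stands on its own.

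It does not, and you say so yourself. Your outer argument is fine: once you know each preimage curve $C$ is isomorphic to $\mathbb{C}$, the Riemann--Hurwitz computation forces $F|_C$ to have degree one, injectivity of $F$ follows, and Bialynicki-Birula--Rosenlicht finishes. The reduction in case (i) of ``$|S|=1$ for every fibre'' to ``$F$ is proper'' is also correct (an unramified proper map to $\mathbb{C}$ is a covering, hence an isomorphism since $\mathbb{C}$ is simply connected; conversely a second place at infinity on some $C_a$ gives a sequence escaping $\mathbb{C}^n$ with bounded $F$-image). But at this point your argument stops: you never prove properness. Invoking Jelonek to say the non-properness locus is a uniruled hypersurface and asserting that ``one must show this is incompatible with every member of the family being irreducible and rational'' is a restatement of the goal, not a proof; uniruledness of $A_F$ by itself does not visibly conflict with rationality of the fibres $C_a$. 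Likewise in case (ii) you end with ``resolving this pencil and analysing its fibration structure is the natural route\dots and is where the combinatorics of the rational pencil does the real work''---which is a description of where the work lies, not the work itself.

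That missing step is exactly the substance of the theorems the paper cites. For (i) the various proofs (monodromy arguments in N\'emethi--Sigray, the reducible-fibre theorem of Vistoli/Neumann--Norbury in the plane, etc.) all do nontrivial global work to force a single place at infinity; for (ii) Nollet--Xavier go through a genuinely different mechanism (holomorphic injectivity) rather than any pencil analysis. Your vector-field/flow idea is a nice heuristic for why properness is the issue, but it does not by itself exclude finite-time escape, and neither Jelonek's structure theorem nor the rationality hypothesis alone closes that gap. As written, the proposal is an outline with the decisive lemma left unproved.
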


Here, we mean an irreducible rational curve to be an algebraic
curve homeomorphic to the 2-dimensional sphere with a finite
number of  punctures.

Theorem 1 (ii), due to Nollet and Xavier (Corollary 1.3,
\cite{NF}), is deduced from a deep result  on the holomorphic
injectivity (Theorem 1.1, in \cite{NF}). Theorem 1 (i) appears
earlier with algebraic and algebra-geometric proofs in Razar
\cite{Razas}, Le and Weber \cite{LeWe2}, Friedland
\cite{Friedland},  and Heitmann \cite{Heitmann}  for $n=2$, and in
Nemethi and Sigray \cite{NemethiSigray} for general case. In fact,
as observed by Vistoli \cite{Vistoli} and  by Neumann and Norbudy
\cite{NeumannNorbudy}, {\it non-trivial rational polynomials in
two variable must have reducible fibres}.

In this short article we would like to note that in certain cases
the invertibility of polynomial map $F=(P,Q)$ of $\mathbb{C}^2$
with finite fibres can be characterized by the irreducibility and
rationality of the curves $aP+bQ=0$, $(a:b)\in \mathbb{P}^1$. Our
result is
\begin{theorem}[Main Theorem]\label{Main}  Let $F=(P,Q)$ be a
polynomial map of\ $\mathbb{C}^2$ with finite fibres such that all
of the curves $aP+bQ=0$, $(a:b)\in \mathbb{P}^1$, are irreducible
and rational. Then, the followings are equivalent
\begin{enumerate} \item[a)] $(0,0)$ is a regular value of $F$;
\item[b)] $\det DF\equiv const.\neq 0$; \item[c)] $F$ is
invertible.
\end{enumerate}
\end{theorem}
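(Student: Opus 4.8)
The plan is to prove the two easy arrows $(c)\Rightarrow(b)\Rightarrow(a)$ directly and to concentrate on the substantive implications $(a)\Rightarrow(b)$ and $(b)\Rightarrow(c)$, which together close all three equivalences. For $(c)\Rightarrow(b)$ I would differentiate the identity $G\circ F=\mathrm{id}$ for the polynomial inverse $G$, obtaining $(\det DG\circ F)\cdot\det DF\equiv 1$ and hence $\det DF\in\mathbb{C}^{*}$; and $(b)\Rightarrow(a)$ is immediate, since a nowhere-vanishing Jacobian makes every value, in particular $(0,0)$, regular.

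The geometric heart is that the members $C_{(a:b)}=\{aP+bQ=0\}$ are exactly the fibres $F^{-1}(\ell_{(a:b)})$ over the lines $\ell_{(a:b)}=\{au+bv=0\}$ through the origin, equivalently the fibres of the rational map $[P:Q]:\mathbb{C}^{2}\dashrightarrow\mathbb{P}^{1}$. Because $F(C_{(a:b)})\subseteq\ell_{(a:b)}$ and $F^{-1}(\ell_{(a:b)})=C_{(a:b)}$, restriction gives a generically $d$-to-one morphism $F|_{C_{(a:b)}}:C_{(a:b)}\to\ell_{(a:b)}\cong\mathbb{C}$ from an irreducible rational curve onto the affine line, where $d=\deg F$ is the generic fibre number of $F$; in the linear coordinate on $\ell_{(a:b)}$ it is simply $P|_{C_{(a:b)}}$. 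Under the Jacobian hypothesis $(b)$ every member is smooth, since a singular point would satisfy $a\nabla P+b\nabla Q=0$, contradicting $\det DF\neq 0$; the same computation gives $dP|_{TC_{(a:b)}}\neq 0$, so each restriction $F|_{C_{(a:b)}}$ is unramified.

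To finish $(b)\Rightarrow(c)$ I would show $d=1$. Passing to the smooth model $\bar C\cong\mathbb{P}^{1}$ and the extension $\bar f:\mathbb{P}^{1}\to\mathbb{P}^{1}$, Riemann--Hurwitz yields $\sum_{p}(e_{p}-1)=2d-2$ with all ramification supported on the punctures $\bar C\setminus C$, the restriction being étale over the affine line. The decisive step is properness: one must exclude punctures mapping to \emph{finite} values of $\ell_{(a:b)}$, i.e. asymptotic finite values of $P|_{C}$. In their absence $F|_{C}$ is a finite étale map onto the simply connected $\mathbb{C}$, hence a trivial cover and $d=1$. The global degree being one, every $F|_{C_{(a:b)}}$ has degree one, and being unramified each is injective; hence $F$ is injective, since for $p\neq(0,0)$ both preimages lie on the single member through $p$, while $F^{-1}(0,0)$ is contained in a member and so is a single point. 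An injective polynomial endomorphism of $\mathbb{C}^{2}$ is an automorphism, which is $(c)$. This is where I expect to invoke the quoted principle that a non-trivial rational polynomial in two variables must carry reducible fibres, or Theorem~1, to extract the needed properness from the irreducibility and rationality of \emph{every} member.

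For $(a)\Rightarrow(b)$ I would upgrade the single regular value to the full Jacobian condition. The zero locus of $\det DF$ is precisely the set of affine singular points of the members, while $(a)$ makes the base locus $B=F^{-1}(0,0)$ a set of distinct points that are smooth on every member and map to the origin of each $\ell_{(a:b)}$. I would rule out affine singularities away from $B$ by a global Euler-characteristic balance for the pencil $[P:Q]$ on $\mathbb{C}^{2}\setminus B$, comparing $\chi(\mathbb{C}^{2}\setminus B)$ with $\chi(\mathbb{P}^{1})\,\chi(F_{\mathrm{gen}})$ and the atypical-member corrections: an affine singularity on an irreducible rational member strictly lowers its Euler characteristic and, the base points remaining smooth, unbalances the identity. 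Once $V(\det DF)=\emptyset$, the Nullstellensatz makes $\det DF$ a nonzero constant. The principal obstacle throughout is the control at infinity---showing $P|_{C}$ omits no asymptotic finite value so that $d=1$---which is precisely the delicate point of the Jacobian problem and the place where the hypothesis on \emph{all} members of the pencil, through the minimal resolution of $[P:Q]$ at infinity, must be used decisively.
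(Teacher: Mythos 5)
Your diagnosis is accurate --- the whole difficulty is properness at infinity --- but the proposal does not actually prove either substantive implication, and this is a genuine gap, not a presentational one. In $(b)\Rightarrow(c)$ you reduce correctly to excluding finite asymptotic values of $P|_{C}$ along each member, but then you only ``expect to invoke'' Theorem 1 or the Vistoli/Neumann--Norbury principle, and neither applies: Theorem 1 (ii) requires the line condition at a \emph{generic} point $q$, whereas here it is assumed only at the single point $(0,0)$ (removing that genericity is precisely what the Main Theorem adds); Theorem 1 (i) concerns pencils of parallel lines, i.e.\ fibres of a single polynomial; and the Vistoli/Neumann--Norbury result is about rational \emph{polynomials} (pencils containing the line at infinity) and yields reducibility of some fibre, not properness of $F$ on members of the pencil $aP+bQ$. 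Worse, under $(b)$ the asymptotic values can genuinely threaten to occur --- the hard case in the paper is exactly $F^{-1}(0,0)=\emptyset$, where $(0,0)\in A_F$ --- so no argument that merely hopes they are absent can succeed; one must derive a contradiction from their presence. The paper does this with machinery you never reconstruct: Suzuki's Euler-characteristic formula gives $\sum_{\lambda}(r_\lambda-1)=h_G-2$ (Lemma 1), so irreducibility of \emph{all} members forces exactly two horizontal components; the classification of horizontal components (Lemma 2) then shows that if $B=\emptyset$ the non-proper value set $A_F$ must be a union of lines through $(0,0)$; and the structure theorem for $A_F$ of non-zero constant Jacobian maps (Theorem 4: components are parameterized by $(\varphi,\psi)$ with $\deg\varphi/\deg\psi=\deg P/\deg Q$, so $A_F$ never contains a component isomorphic to $\mathbb{C}$) rules this out.

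Your route $(a)\Rightarrow(b)$ has an independent flaw: the zero locus of $\det DF$ is \emph{not} ``precisely the set of affine singular points of the members.'' If $\nabla P(w)\parallel\nabla Q(w)$, the member through $w$ is $Q(w)P-P(w)Q=0$, whose gradient at $w$ is $Q(w)\nabla P(w)-P(w)\nabla Q(w)$; this can be nonzero (e.g.\ $\nabla P(w)=\nabla Q(w)\neq 0$ with $P(w)\neq Q(w)$), in which case the member is smooth at $w$ and $F$ restricted to it merely ramifies there. A fibrewise Euler-characteristic balance for the pencil sees only the topology of the members, not the ramification of $F$ along smooth members, so the proposed mechanism cannot detect such critical points and cannot close $(a)\Rightarrow(b)$. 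The paper sidesteps this entirely by proving $(a)\Rightarrow(c)$ directly, with no Jacobian input: when $(0,0)$ is a regular value, $B\neq\emptyset$, so $h_G=2$ forces $h_\infty=1$ with the unique horizontal component at infinity of Type (IIa); hence $F$ has no dicritical components, $A_F=\emptyset$, and $F$ is proper; then $\deg_{geo.}F=\sum_{w\in F^{-1}(0,0)}\deg_w F=1$ because the fibre over $(0,0)$ is a single non-singular point, so $F$ is injective and therefore an automorphism --- and $(b)$ then follows from $(c)$ trivially. If you want to salvage your outline, replace $(a)\Rightarrow(b)$ by this direct $(a)\Rightarrow(c)$ argument and replace the hoped-for properness in $(b)\Rightarrow(c)$ by the $h_G=2$ count together with the structure theorem for $A_F$.
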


Theorem \ref{Main} leads to a little surprise that for the case
$n=2$ Theorem 1 (ii) is still valid without  the Jacobian
condition.

\begin{theorem}\label{Cri} Let $F$ be a  polynomial map of\ $\mathbb{C}^2$ with
finite fibres. If  for generic points $q\in \mathbb{C}^2$ the
inverse images $F^{-1}(l)$ of complex lines $l\subset\mathbb{C}^2$
passing through $q$ are irreducible rational curves, then $F$ is
invertible.
\end{theorem}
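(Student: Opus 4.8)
The plan is to reduce Theorem~\ref{Cri} to the Main Theorem by a translation in the target. First I would observe that the lines $l\subset\mathbb{C}^2$ passing through a fixed point $q=(q_1,q_2)$ are exactly the curves $a(x-q_1)+b(y-q_2)=0$ with $(a:b)\in\mathbb{P}^1$, so that their inverse images under $F=(P,Q)$ are precisely the members of the pencil
\[
a(P-q_1)+b(Q-q_2)=0,\qquad (a:b)\in\mathbb{P}^1 .
\]
Setting $\tilde F:=F-q=(P-q_1,Q-q_2)$, these are exactly the curves $a\tilde P+b\tilde Q=0$ attached to the map $\tilde F$. Thus the hypothesis of Theorem~\ref{Cri} says precisely that, for $q$ ranging over a generic set, the translated map $\tilde F$ satisfies the pencil hypothesis of the Main Theorem; and since a translation is a polynomial automorphism of $\mathbb{C}^2$, the map $\tilde F$ has finite fibres and is invertible if and only if $F$ is.

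Next I would arrange to land in case (a) of the Main Theorem. Because $F$ has finite fibres it is dominant and $\det DF\not\equiv 0$ (otherwise $P,Q$ would be algebraically dependent and the generic fibre positive-dimensional); hence the critical values of $F$, being contained in the image of the proper subvariety $\{\det DF=0\}$, are contained in a proper algebraic subset of $\mathbb{C}^2$. Therefore the regular values of $F$ form a nonempty Zariski-open set. Intersecting this with the (also generic) set of points $q$ for which the pencil hypothesis holds, I can choose a single point $q$ that is simultaneously a regular value of $F$ and such that every curve $a(P-q_1)+b(Q-q_2)=0$ is irreducible and rational.

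For such a $q$ the map $\tilde F=F-q$ has finite fibres, all its curves $a\tilde P+b\tilde Q=0$ are irreducible rational, and $(0,0)$ is a regular value of $\tilde F$ — this is exactly the statement that $q$ is a regular value of $F$, since $D\tilde F=DF$ and $\tilde F^{-1}(0,0)=F^{-1}(q)$. Condition (a) of the Main Theorem therefore holds for $\tilde F$, so by the equivalence (a)$\Leftrightarrow$(c) the map $\tilde F$ is invertible, and consequently $F=\tilde F+q$ is invertible as well.

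The work here is light once the translation is set up, and all the substantive content is carried by the Main Theorem. The only step requiring a genuine argument, and the one I expect to be the main obstacle, is the genericity bookkeeping: one must verify that $\{\det DF=0\}$ is a proper subvariety whose image omits a nonempty open set, and that the resulting Zariski-open set of regular values really meets the generic set supplied by the hypothesis. The latter follows because two nonempty Zariski-open subsets of the irreducible variety $\mathbb{C}^2$ always intersect, so a common good point $q$ exists.
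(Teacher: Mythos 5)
Your proposal is correct and follows essentially the same route as the paper: the paper's proof likewise notes that finite fibres force $\det DF\not\equiv 0$, then (implicitly via a translation in the target) assumes $(0,0)$ is a regular value with all lines through it pulling back to irreducible rational curves, and applies the implication (a)$\Rightarrow$(c) of the Main Theorem. Your version merely makes explicit the translation $\tilde F=F-q$ and the genericity bookkeeping (intersecting the Zariski-open set of regular values with the generic set from the hypothesis) that the paper compresses into the phrase ``by the assumptions we can assume.''
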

\begin{proof} Since the fibres
of $F$ are finite, we have $\det DF\not \equiv 0$. Then, by the
assumptions we can assume that  $(0,0)$ is a regular value of $F$
and for all lines $l$ passing through $(0,0)$ the inverse images
$F^{-1}(l)$ are irreducible rational curves. Hence, by Theorem
\ref{Main} the map $F$ is invertible.
\end{proof}
In attempt to understand the plane Jacobian conjecture it is worth
to consider the questions:

\begin{question} Does  the Jacobian condition ensure the irreducibility of all of the
curves $aP+bQ=0$, $(a:b)\in \mathbb{P}^1$ ?
\end{question}

\begin{question} Is a non-zero constant Jacobian polynomial map $F=(P,Q)$
of $\mathbb{C}^2$ invertible if  all of the curves $aP+bQ=0$,
$(a:b)\in \mathbb{P}^1$, are irreducible ?
\end{question}
Kaliman \cite{Kaliman93} observe that to prove the plane Jacobian
conjecture it is sufficient to consider non-zero constant Jacobian
polynomial maps $F=(P,Q)$, in which all of  fibres $P=c$, $c\in
\mathbb{C}$, are irreducible. Relating to Question 2 note that the
only irreducibility of the curves $aP+bQ=0$, $(a:b)\in
\mathbb{P}^1$, does not not guaranty the invertibility of the
polynomial map $F=(P,Q)$. For example, the map
$F(x,y)=(x,x^2+y^3)$ is not invertible, but the curves
$ax+b(x^2+y^3)=0$, $(a:b)\in \mathbb{P}^1$, are irreducible.
Further deep examinations on the relation between the Jacobian
condition and the geometry of the pencil of curves $aP+bQ=0$ would
be useful in the pursuit for the solution of the plane Jacobian
problem.

The proof of Theorem \ref{Main} will be carried out in \$3 after
some necessary preparations in \$ 2.

\vskip0,5cm \noindent{\bf 2.} From now on $F=(P,Q)$, is a given
polynomial map of $\mathbb{C}^2$ with finite fibres. Our proof of
Theorem \ref{Main} is based on the facts below.

i) Following \cite{Jelonek}, by the non-propoer value set $A_F$ of
$F$  we mean the set of all values $a\in \mathbb{C}^2$ such that
$a$ is the limit set of $F(v_k)$ for a sequence $v_k\in
\mathbb{C}^2$ tending to $\infty$. The set $A_F$ is a plane curve
composed of the images of some polynomial maps from $\mathbb{C}$
into $\mathbb{C}^2$ \cite{Jelonek}. When $F$ has finite fibres, by
definitions
$$ v\not\in  A_F \Leftrightarrow \sum_{w\in F^{-1}(v)} \deg_wF=
\deg_{geo.} F, \eqno(1)$$ where $\deg_w F$ is the multiplicity of
$F$ at $w$ and $\deg_{geo.}F$ is the number of solutions of the
equation $F=v$ for generic points $v\in \mathbb{C}^2$. In the case
when $F$ satisfies the Jacobian condition we have

\begin{theorem}[\cite{Chau99}, \cite{Chau04}] Let $F=(P,Q)$ be a non-zero constant
Jacobian polynomial map. Then, the irreducible components of
$A_F$, if exists, can be parameterized by polynomial maps $
t\mapsto (\varphi(t),\psi(t))$, $\varphi, \psi\in \mathbb{C}[t]$,
satisfying
$${\deg \varphi\over\deg \psi}={\deg P\over \deg Q}.$$ In
particular, $A_F$ can never contains components isomorphic to the
line $\mathbb{C}$.
 \end{theorem}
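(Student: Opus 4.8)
The plan is to build on Jelonek's description quoted above: since $A_F$ is already known to be a union of images of polynomial maps $\mathbb{C}\to\mathbb{C}^2$, the existence of a parameterization $t\mapsto(\varphi(t),\psi(t))$ of each component is granted, and the real task is to pin down the degree ratio and then to rule out components isomorphic to $\mathbb{C}$. Write $p=\deg P$, $q=\deg Q$ and $d=\gcd(p,q)$. I would realize a fixed component $C$ of $A_F$ as coming from a branch at infinity: a meromorphic arc $v(\tau)=(x(\tau),y(\tau))$ with $v(\tau)\to\infty$ as $\tau\to0$ such that $F(v(\tau))$ traces $C$, so that $\varphi=P\circ v$, $\psi=Q\circ v$, and the sought degrees are the pole orders of these functions at the point of the parameter lying over the line at infinity. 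Equivalently, one resolves $\bar F\colon\mathbb{P}^2\dashrightarrow\mathbb{P}^2$ by $\pi\colon X\to\mathbb{P}^2$ and takes $C=\widetilde F(E)$ for a dicritical component $E\cong\mathbb{P}^1$ of the divisor at infinity.

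The first real step is to exploit the Jacobian condition at the level of leading forms. Let $P_p,Q_q$ be the top-degree homogeneous parts of $P,Q$. The degree-$(p+q-2)$ part of $\det DF$ equals the Jacobian $J(P_p,Q_q)=(P_p)_x(Q_q)_y-(P_p)_y(Q_q)_x$, so $\det DF\equiv\mathrm{const}$ forces $J(P_p,Q_q)\equiv0$. For two homogeneous polynomials in two variables this algebraic dependence means there is a homogeneous $H$ of degree $d$ with $P_p=aH^{p/d}$ and $Q_q=bH^{q/d}$ for constants $a,b$. Consequently the only directions in which $v(\tau)$ may escape to infinity while $F(v(\tau))$ stays bounded are the roots of $H$ on the line at infinity.

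Next I would carry out the degree bookkeeping along such a branch. Fixing a root of $H$ of multiplicity $m$ and writing $\eta=y/x-(\text{root})$, the leading-form description gives $P\circ v\sim a'\,x^{p}\eta^{mp/d}$ and $Q\circ v\sim b'\,x^{q}\eta^{mq/d}$; boundedness of the image forces $\eta\sim x^{-d/m}$, after which a closer analysis of the subleading terms shows that the actual pole orders of $\varphi$ and $\psi$ at the end of the parameter remain in the ratio $p:q$, giving
$$\frac{\deg\varphi}{\deg\psi}=\frac{p}{q}=\frac{\deg P}{\deg Q}.$$
I expect this to be the main obstacle: because $P_p$ and $Q_q$ vanish along the branch, their contributions cancel and one cannot simply read off the degrees from the leading forms; the relation $P_p^{\,q}=c\,Q_q^{\,p}$ coming from the common factor $H$ is exactly what keeps the cancellations of the two coordinates synchronized and preserves the ratio.

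Finally, for the statement that no component is isomorphic to $\mathbb{C}$, suppose $C\cong\mathbb{C}$. Then $C$ has a single place at infinity and admits a proper (birational) polynomial parameterization, so by Abhyankar--Moh--Suzuki one of $\deg\varphi,\deg\psi$ divides the other. Combined with the ratio $p/q$, written in lowest terms $p'/q'$ with $p',q'$ coprime, this would force $p'=1$ or $q'=1$; I would then derive a contradiction from the branch multiplicity $m\ge1$, which makes both pole orders carry a nontrivial common factor and thereby precludes the coprime/divisibility pattern required of an embedded affine line. Making this last incompatibility precise is the delicate point, and I would treat it as the second obstacle after the degree computation.
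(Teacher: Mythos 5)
First, note that the paper does not prove this statement at all: it is quoted as Theorem 4 from \cite{Chau99} and \cite{Chau04}, where the proof rests on a careful analysis of Newton--Puiseux expansions at infinity (dicritical series), with the Jacobian condition re-applied at every stage of the expansion. Your proposal correctly identifies the right geometric frame (a component of $A_F$ arises as $f(E)$ for a dicritical component $E$ at infinity, reached along arcs $v(\tau)\to\infty$ with $F(v(\tau))$ bounded) and the right first step ($\det DF\equiv const.$ forces $J(P_p,Q_q)\equiv 0$, hence $P_p^{\,q}=c\,Q_q^{\,p}$). But the step you yourself flag as ``the main obstacle'' is precisely the entire content of the theorem, and it is not closed by what you write. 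The relation $P_p^{\,q}=c\,Q_q^{\,p}$ synchronizes only the \emph{first} cancellation along the branch: once the leading forms vanish along $\eta\sim x^{-d/m}$, the pole orders of $\varphi=P\circ v$ and $\psi=Q\circ v$ are determined by subleading coefficients that the leading forms do not control. The vanishing of $J(P_p,Q_q)$ is only the degree-$(p+q-2)$ part of the Jacobian condition; to keep the two coordinates synchronized through all subsequent cancellations one must invoke $\det DF\equiv const.$ anew at each step of the Puiseux expansion, which is exactly the inductive machinery of \cite{Chau99}. Asserting that ``a closer analysis of the subleading terms shows'' the ratio survives is a statement of the theorem, not a proof of it. (A smaller inaccuracy in the same step: one only gets $P_p,Q_q$ as powers of a common form $H$ whose degree \emph{divides} $\gcd(p,q)$, not that $\deg H=\gcd(p,q)$; and the claim that boundedness forces a single branch shape $\eta\sim x^{-d/m}$ also needs the higher-order matching you have not done.)

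The second gap is in your exclusion of components isomorphic to $\mathbb{C}$, and here the strategy itself is misdirected rather than merely incomplete. From Abhyankar--Moh--Suzuki plus the ratio you can only conclude $p'=1$ or $q'=1$ (in lowest terms), and this is \emph{not} absurd: nothing forbids $\deg P \mid \deg Q$ for a Keller map a priori, so the ``coprime/divisibility pattern'' you hope to contradict via the branch multiplicity $m$ does not produce a contradiction, and you concede you cannot make it precise. The standard way to get the ``in particular'' clause \emph{from} the ratio statement is different: if a component $C\cong\mathbb{C}$ existed, rectify it by Abhyankar--Moh--Suzuki, i.e.\ choose a polynomial automorphism $u$ of $\mathbb{C}^2$ with $u(C)=\{y=0\}$; then $u\circ F$ is again a non-zero constant Jacobian map, $A_{u\circ F}=u(A_F)$ contains the axis $\{y=0\}$, and \emph{every} polynomial parameterization $t\mapsto(\varphi(t),\psi(t))$ of that axis has $\psi\equiv const.$, so no parameterization can realize the finite positive ratio $\deg(u\circ F)_1/\deg(u\circ F)_2$ --- a direct contradiction. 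Note that this applies the theorem to the transformed map, using the invariance of the Keller condition and the equivariance of $A_F$ under left composition with automorphisms, rather than applying Abhyankar--Moh--Suzuki to the degrees of the parameterization as you do. In summary: the skeleton of your argument matches the known proofs, but both of the points you label as obstacles are genuine, unfilled gaps, and the second one needs a different idea, not just more care.
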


ii) Let  $D_\lambda:=\{(x,y)\in \mathbb{C}^2: aP(x,y)+bQ(x,y)=0\}$
for $\lambda=(a:b)\in \mathbb{P}^1$ and denote by $r_\lambda$ the
number of irreducible components of the curve $D_\lambda$.
Regarding the plane $\mathbb{C}^2$ as a subset of the projective
plane $\mathbb{P}^2$, we can associate to $F$ the rational map
$G:\mathbb{P}^2\longrightarrow \mathbb{P}^1$ given  by
$G(x,y)=(P(x,y):Q(x,y))\in \mathbb{P}^1,$ which is well defined
outside the finite set $B:=F^{-1}(0,0)$ and a possible finite
subset of the line at infinity of $\mathbb{C}^2$. We can extend
$G$ to a regular morphism $g: X\longrightarrow \mathbb{P}^1$ from
a compactification $X$ of $\mathbb{C}^2\setminus B$ to
$\mathbb{P}^1$. By a {\it horizontal component} ({\it constant
component}) of $G$ we mean an irreducible component $\ell$ of the
divisor $\mathcal{D}:=X\setminus (\mathbb{C}^2\setminus B)$ such
that the restriction $g_l$ of $g$ to $l$ is a non-constant mapping
(res. constant mapping). Let us denote by $h_G$ the number of
horizontal components $l$ of $g$. The number $h_G$ is depended on
$P$ and $Q$, but not on the compactification $X$ of
$\mathbb{C}^2$.

We can construct such extension $g: X\longrightarrow \mathbb{P}^1$
by a minimal sequence of the blowing-ups $\pi: X\longrightarrow
\mathbb{P}^2$ that removes all of the indeterminacy points of the
rational map $G$. In such an extension $g$ the divisor
$\mathcal{D}$ is the disjoint union of the connected divisors
$\mathcal{D}_\infty:=\pi^{-1}(L_\infty)$ and
$\mathcal{D}_b:=\pi^{-1}(b)$, $b\in B$, where $L_\infty$ indicates
the line at infinity of $\mathbb{C}^2\subset \mathbb{P}^2$.
Denotes by $h_\infty$ and $h_b$ the numbers of horizontal
components of $G$ contained in the divisors $\mathcal{D}_\infty$
and $\mathcal{D}_b$, $b\in B$, respectively. Obviously,
$$h_\infty >0\text{ and } h_b>0 \text{ for } b\in B\eqno(2)$$
and
$$h_G=h_\infty+\sum_{b\in B}h_b.\eqno(3)$$
\begin{lemma}\label{Lem1} If the generic curve $D_\lambda$
is  irreducible and rational, then
$$\sum_{\lambda\in \mathbb{P}^1} (r_\lambda -1)=h_\infty+\sum_{b\in B}h_b-2.\eqno(4)$$
\end{lemma}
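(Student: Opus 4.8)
The plan is to compare two computations of the topological Euler characteristic of a smooth projective model $X$ on which the pencil becomes a morphism, and to read off the number of fibre components from the resulting identity.

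First I would fix the resolution $g\colon X\to\mathbb P^1$ of the excerpt, where $\pi\colon X\to\mathbb P^2$ is a composite of $N$ point blow-ups removing the indeterminacy of $G=(P:Q)$. Since $X$ is obtained from $\mathbb P^2$ by $N$ blow-ups, $\chi(X)=3+N$. Because every blow-up centre lies over $L_\infty$ or over a point of $B$, the divisor $\mathcal D=X\setminus(\mathbb C^2\setminus B)$ has exactly $N+1$ irreducible components: the strict transform of $L_\infty$ together with the $N$ exceptional curves (the points of $B$ contribute no original curve). Splitting these into horizontal and constant components gives $h_G+V=N+1$, where $V$ denotes the total number of constant components and $h_G=h_\infty+\sum_{b\in B}h_b$ by (3).

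Next I would exploit that, since the generic member $D_\lambda$ is irreducible and rational, the generic fibre $\Phi$ of $g$ is a smooth irreducible rational curve, so $\Phi\cong\mathbb P^1$ and $X$ is a rational surface carrying a genus-zero fibration. I would then use additivity of the Euler characteristic over a stratification of the base $\mathbb P^1$ into its generic point and the finitely many special values:
\[
\chi(X)=2\,\chi(\Phi)+\sum_{\lambda\in\mathbb P^1}\bigl(\chi(g^{-1}(\lambda))-\chi(\Phi)\bigr).
\]
The heart of the matter is the structure of the special fibres: on a genus-zero fibration over a rational surface every fibre is a connected tree of smooth rational curves, so if $g^{-1}(\lambda)$ has $k_\lambda$ irreducible components then $\chi(g^{-1}(\lambda))=k_\lambda+1$. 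Substituting $\chi(\Phi)=2$ yields $\chi(X)=4+\sum_\lambda(k_\lambda-1)$, hence $\sum_\lambda(k_\lambda-1)=N-1$.

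Finally I would identify the two counts. Away from $\mathcal D$ the fibre $g^{-1}(\lambda)$ meets $\mathbb C^2\setminus B$ in $D_\lambda\setminus B$, whose closure contributes exactly the $r_\lambda$ strict transforms of the components of $D_\lambda$; the remaining components of $g^{-1}(\lambda)$ are the constant components of $\mathcal D$ lying over $\lambda$. Thus $k_\lambda=r_\lambda+v_\lambda$ with $\sum_\lambda v_\lambda=V$, and combining the displayed identities gives
\[
\sum_{\lambda\in\mathbb P^1}(r_\lambda-1)=\sum_\lambda(k_\lambda-1)-V=(N-1)-V=h_G-2=h_\infty+\sum_{b\in B}h_b-2,
\]
which is (4). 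The step I expect to be most delicate is the special-fibre analysis: guaranteeing that each reduced fibre is a tree of smooth rational curves meeting transversally, so that $\chi(g^{-1}(\lambda))=k_\lambda+1$ with no correction coming from cusps, tangencies or multiple components. I would secure this by passing, if necessary, to a normal-crossing refinement of $X$; this changes neither $r_\lambda$ (determined by the affine curves $D_\lambda$) nor $h_G$ (independent of the compactification, as noted after (3)), so the identity obtained there is the desired one.
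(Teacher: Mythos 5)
Your proposal is correct and follows essentially the same route as the paper's own proof: both double-count $\chi(X)$, once along the fibration and once via the boundary divisor, your stratification identity $\chi(X)=2\chi(\Phi)+\sum_\lambda(\chi(g^{-1}(\lambda))-\chi(\Phi))$ being exactly Suzuki's formula (5) (which the paper cites rather than rederives), and your blow-up count $\chi(X)=N+3$ agreeing with the paper's $\chi(X)=m+2$ since $\mathcal{D}$ has $m=N+1$ components, so that your bookkeeping $k_\lambda=r_\lambda+v_\lambda$, $h_G+V=N+1$ matches the paper's $m=h_\infty+\sum_{b\in B}h_b+\sum_\lambda m_\lambda$ and $\chi(C_\lambda)=r_\lambda+m_\lambda+1$. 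The only difference is cosmetic: you justify the tree-of-rational-curves structure of the special fibres (via genus-zero fibration theory, with an SNC refinement as fallback) where the paper simply asserts it, and you inline the proof of (5) instead of citing it.
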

The equality (4) is a folklore fact which can be reduced from the
estimation on the total reducibility order of pencils of curves
obtained by Vistoli in \cite{Vistoli}. The proof presented below
is quite elementary and is analogous to those of Kaliman
\cite{Kaliman92} for the total reducibility order of polynomials
in two variables.

\begin{proof}[Proof of Lemma \ref{Lem1}] Fixed a regular morphism $g$ which is a  blowing-up version of $G$.
Let $C_\lambda$ be the fiber $g=\lambda$, $\lambda\in
\mathbb{P}^1$, and let $C$ be a generic fiber of $g$. We will use
Suzuki's formula \cite{Suzuki}
$$\sum_{\lambda\in \mathbb{P}^1}(\chi(C_\lambda)-\chi(C))=\chi(X)-2\chi(C).\eqno(5)$$
Here,  $\chi(V)$ indicates the Euler-Poincare characteristic of
$V$.

Let us denote by $m$  the number of irreducible components of the
divisor $\mathcal{D}$ and by $m_\lambda$ the number of irreducible
components of $C_\lambda$ contained in $\mathcal{D}$. Then, we
have $\chi(X)=m+2$ and
$$m=h_\infty+\sum_{b\in B}h_b+\sum_{\lambda\in \mathbb{P}^1}m_\lambda.$$

Since the generic curves $D_\lambda$ are irreducible and rational
 the generic fibre $C$ of $g$ is a copy $\mathbb{P}^1$
and the fibres $C_\lambda$ are connected rational curves with
simple normal crossing. Therefore, $\chi(C)=2$ and
$\chi(C_\lambda)=r_\lambda+m_\lambda+1.$

Now, by the above estimations we have
$$\chi(X)-2\chi(C)=h_\infty+\sum_{b\in B}h_b+\sum_{\lambda\in
\mathbb{P}^1}m_\lambda-2\eqno(6)$$ and
$$\sum_{\lambda\in \mathbb{P}^1}(\chi(C_\lambda)-\chi(C))=\sum_{\lambda\in
\mathbb{P}^1}(r_\lambda-1) +\sum_{\lambda\in
\mathbb{P}^1}m_\lambda.\eqno(7)$$ Putting (6) and (7)  into (5) we
get the desired equality (4).
\end{proof}

iii) Regarding polynomials $P$ and $Q$ as rational maps from
$\mathbb{P}^2$ into $\mathbb{P}^1$, the blowing-up
$X\stackrel{\pi}{\longrightarrow} \mathbb{P}^2$ in (ii) also
provides natural extensions $p,q:X\longrightarrow \mathbb{P}^1$ of
$P$ and $Q$, which may have some indeterminacy points. If
necessary, we can replace $X$ by its convenient blowing-up version
so that $p$ and $q$ are regular morphisms and
$f=(p,q):X\longrightarrow \mathbb{P}^1\times\mathbb{P}^1$ is a
regular extension of $F$.

The restrictions of $p$ and $q$ to each irreducible component
$l\subset \mathcal{D}$ then determine holomorphic maps from $l$ to
$\mathbb{P}^1$, denoted by $p_l$ and $q_l$ respectively. We can
divide horizontal components $l$ of $G$ into some following types:
\begin{enumerate}
\item[I)] $l\subset \mathcal{D}_b$. Then, $(p_l,q_l)\equiv(0,0)$

\item[II)] $l\subset \mathcal{D}_\infty$. Then, either

  a) $(p_l,q_l)\equiv(\infty,\infty)$,

  b) $(p_l,q_l)\equiv(0,0)$, or

  c) $(p_l,q_l)$ is a non-constant mapping with
  $(p_l:q_l)\neq const.$ .
\end{enumerate}

Obvious, in Type (IIc) $(p_l,q_l)(l)\cap \mathbb{C}^2\neq
\emptyset$.

By {\it dicritical component} of $F$ we mean an irreducible
component $l\subset \mathcal{D}_\infty$ such that $(p_l,q_l)$ is a
non-constant mapping. Obviously, by the definitions
$$A_F=\bigcup_{l \text{ dicritical components of } F} (f(l)\cap
\mathbb{C}^2).$$ In particular, $F$ is a proper map of
$\mathbb{C}^2$ if and only if $F$ does not have  dicritical
components.

\begin{lemma}\label{Lem2} We have
\begin{enumerate}
\item[a)] $G$ has at least one horizontal component of Type {\rm
(IIa)}; \item[b)] If $A_F\neq \emptyset$, then $G$ has at least
one horizontal component of Types {\rm (IIb)} or Type {\rm (IIc)}.
If $(0,0)\in A_{F}$, then $G$ has at least one horizontal
component of Type {\rm (IIb)}; \item[c)] If $l$ is a dicritical
component of $F$, then either $l$ is a horizontal component of $G$
or $f(l)\cap\mathbb{C}^2$ is a line passing through $(0,0)$.
\end{enumerate}

\end{lemma}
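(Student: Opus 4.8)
The plan is to read all three parts off a single geometric picture. Resolve the ``ratio map'' $\rho\colon \mathbb{P}^1\times\mathbb{P}^1\dashrightarrow\mathbb{P}^1$, $(u,v)\mapsto(u:v)$ (with $u=P$, $v=Q$ on the image of $F$), which is a morphism except at the two points $(0,0)$ and $(\infty,\infty)$. Blowing these up produces a smooth surface $Y\to\mathbb{P}^1\times\mathbb{P}^1$ with exceptional curves $E_0$ (over $(0,0)$) and $E_\infty$ (over $(\infty,\infty)$) and a morphism $\tilde\rho\colon Y\to\mathbb{P}^1$ restricting to an isomorphism on each of $E_0,E_\infty$. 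Since $g=\rho\circ f$ is already regular on $X$, after harmless further blow-ups of the surface $X$ (which create only constant components and change neither $h_G$ nor the types) I may assume $f$ lifts to a morphism $\tilde f\colon X\to Y$ with $g=\tilde\rho\circ\tilde f$. In this language a component $l\subset\mathcal{D}$ is horizontal exactly when $\tilde f(l)$ is not contracted by $\tilde\rho$, and the types are distinguished by the image: $\tilde f(l)=E_\infty$ gives Type (IIa); $\tilde f(l)=E_0$ gives Type (I) or (IIb) according as $l\subset\mathcal{D}_b$ or $l\subset\mathcal{D}_\infty$; any other $\tilde\rho$-horizontal image gives Type (IIc). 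The one global input I will use repeatedly is that $f$, hence $\tilde f$, is surjective: $f(X)$ is closed, two-dimensional, and contains the dense set $F(\mathbb{C}^2\setminus B)$, so $f(X)=\mathbb{P}^1\times\mathbb{P}^1$.

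Part (a) is then immediate. As $\tilde f$ is dominant, $\tilde f^{-1}(E_\infty)$ is a divisor whose generic fibre over $E_\infty$ is nonempty and finite, so at least one of its components $l$ satisfies $\tilde f(l)=E_\infty$; then $f(l)=(\infty,\infty)\notin\mathbb{C}^2$ forces $l\subset\mathcal{D}_\infty$, and the non-constancy of $\tilde\rho|_{E_\infty}$ makes $g_l$ non-constant, i.e. $l$ is of Type (IIa). For part (c), let $l$ be a dicritical component, so $l\subset\mathcal{D}_\infty$ and $(p_l,q_l)$ is non-constant. If the ratio $(p_l:q_l)$ is non-constant then $g_l$ is non-constant and $l$ is horizontal, the first alternative. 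Otherwise $(p_l:q_l)\equiv(a:b)$ is constant, so $f(l)$ lies in the closure of the line $\{bu=av\}\subset\mathbb{P}^1\times\mathbb{P}^1$; since $(p_l,q_l)$ is non-constant $f(l)$ is all of that projective line, whence $f(l)\cap\mathbb{C}^2$ is the affine line $\{bP=aQ\}$, which passes through $(0,0)$.

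The substance is in part (b), which I reduce to the single equivalence
\[(0,0)\in A_F\ \Longleftrightarrow\ \text{$G$ has a horizontal component of Type (IIb)}.\]
Granting this, the first statement of (b) follows from (c): if $A_F\neq\emptyset$ there is a dicritical $l_0$ with $f(l_0)\cap\mathbb{C}^2\neq\emptyset$; if $l_0$ has non-constant ratio it is already Type (IIc), and if its ratio is constant then by (c) its image is a line through $(0,0)$, so $(0,0)\in A_F$ and the equivalence yields a Type (IIb) component. To prove the equivalence I count, on a generic fibre $C=g^{-1}(\lambda)=\overline{D_\lambda}$ with $\lambda=(\alpha:\beta)$ generic, the zeros of the meromorphic function $Q|_C$. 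On one hand $Q|_C\colon C\to\mathbb{P}^1$ has degree $\deg_{geo}F=:\mu$, because a generic value $c$ is attained at the $\mu$ points of $F^{-1}(\alpha c/\beta,\,c)$, all lying on $D_\lambda$. On the other hand, for generic $\lambda$ the condition $Q=0$ on $C$ forces $(P,Q)=(0,0)$, so the zeros of $Q|_C$ are exactly the points of $C$ over the origin: the finite ones lie over $B$ and contribute at each $b$ the intersection multiplicity $I_b(D_\lambda,\{Q=0\})=I_b(P,Q)=\deg_bF$, while the remaining zeros lie on $\mathcal{D}_\infty$ and sit on the Type (IIb) components meeting $C$. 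Equating zeros and poles gives
\[\text{(mass of Type (IIb) zeros)}=\mu-\sum_{b\in B}\deg_bF,\]
which by the characterization (1) of $A_F$ is positive exactly when $(0,0)\in A_F$; this is the desired equivalence.

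I expect the real work to be concentrated in part (b): verifying that the finite zeros of $Q|_C$ over $B$ carry total multiplicity $\sum_{b}\deg_bF$ (i.e. matching the local degree of $F$ at a base point of the pencil with the intersection multiplicity of the generic pencil member with $\{Q=0\}$), and that the zeros at infinity are genuinely supported on honest Type (IIb) components rather than being absorbed into the resolution. The remaining bookkeeping — that the further blow-ups of $X$ do not disturb $h_G$ or the type classification, and that the finitely many special fibres (those $\lambda$ with $\alpha\beta=0$, or with $D_\lambda$ non-reduced) may be avoided by choosing $\lambda$ generic — is routine but must be handled carefully to make the count exact.
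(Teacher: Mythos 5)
Your proposal is correct, and for parts (a) and (b) it takes a genuinely different route from the paper, while your part (c) is essentially the paper's own argument (the dichotomy on whether the ratio $(p_l:q_l)$ is constant). For (a), the paper argues by contradiction: if $G$ had no Type (IIa) component, then $(p,q)$ would take finite values at the finitely many boundary points of a generic fibre $C_\lambda$, so $P$ and $Q$ would be constant on each connected component of $D_\lambda$, contradicting finiteness of the fibres of $F$; you instead resolve the ratio map on $\mathbb{P}^1\times\mathbb{P}^1$, lift $f$ to $\tilde f\colon X\to Y$, and extract a component of $\tilde f^{-1}(E_\infty)$ surjecting onto $E_\infty$ from surjectivity of $\tilde f$ --- equally valid and more structural (and your side remarks check out: regularity of $g$ already makes the ideal $(p,q)$ locally principal, and any extra blow-ups only create constant components). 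The substantial divergence is in (b): the paper picks a component $V$ of $A_F$, finds $l\subset\mathcal{D}_\infty$ with $V\subset f(l)$ and classifies it, and, when $(0,0)\in A_F$, asserts that $f$ maps \emph{every} neighborhood $U$ of $D:=f^{-1}(0,0)\cap\mathcal{D}_\infty$ onto a neighborhood of $(0,0)$, whence $g(U)=\mathbb{P}^1$ and $D$ carries a Type (IIb) horizontal component. Your zero/pole count of $Q$ on a generic fibre buys more than this: it makes the (IIb) component quantitative, with ``zero mass'' on $C$ exactly $\deg_{geo.}F-\sum_{b\in B}\deg_b F$, positive precisely when $(0,0)\in A_F$ by the characterization (1); and it sidesteps the paper's neighborhood claim, which as written needs care, since when $B\neq\emptyset$ every value near $(0,0)$ is also attained near the divisors $\mathcal{D}_b$ (because $f^{-1}(0,0)=D\cup\bigcup_b\mathcal{D}_b$), so that $f(U)$ covering a full two-dimensional neighborhood of the origin is not immediate --- your counting argument is arguably the more rigorous of the two here. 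The verifications you flag at the end (that $I_b(D_\lambda,\{Q=0\})=I_b(P,Q)=\deg_b F$, and that genericity of $\lambda$ and of the value $c$ eliminates boundary preimages and zeros sitting on Type (IIc) components) are indeed the right points to check, and they do check out. One caveat you share with the paper, in (c): when the constant ratio is $(1:0)$ or $(0:1)$, the closure of the corresponding fibre of the ratio map contains not only a line through the origin but also one of the lines $u=\infty$, $v=\infty$; a component with, say, $q_l\equiv\infty$ and $p_l$ non-constant is ``dicritical'' in the paper's literal sense, is not horizontal, and has $f(l)\cap\mathbb{C}^2=\emptyset$, so the stated alternative in (c) silently excludes it in both your proof and the paper's --- harmless in the end, since such components contribute nothing to $A_F$, which is all the Main Theorem uses.
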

\begin{proof} a) Note that each generic fiber $C_\lambda$ is the union of $D_\lambda$ and a finite
number of points lying in horizontal components of $f$, at which
the rational map $(p,q)$ is well defined. If $f$ does not have
horizontal components of Type (IIa), the map $(p,q)$ would obtains
finite values on $C_\lambda\cap \mathcal{D}$, and hence, $P$ and
$Q$ would be constant on each connected component of $D_\lambda$.
This is impossible, since  the fibres of $F$ are finite.

b) By definitions the non-proper value set $A_F$ can be expressed
as $A_F=f(\mathcal{D}_\infty)\cap \mathbb{C}^2.$ Assume $A_F\neq
\emptyset$. Let  $V$ be an irreducible
 component of $A_F$. Then, the inverse $f^{-1}(V)$ must contains
 a  component $l$ of $\mathcal{D}_\infty$
 such that $V\subset f(l)$. Obviously, $g(l)=\mathbb{P}^1$ or ${g}_l\equiv const.$.
 Therefore, $l$ is a horizontal component of Type (IIc) of $G$
 , except when $(0,0)\in A_F$ and $V$ is a line passing through $(0,0)$.
 In the case $(0,0)\in A_F$, the intersection
 $D:=f^{-1}(0,0)\cap \mathcal{D}_\infty$ is not empty.
 Then, $f$ maps each neighborhood $U$ of $D$ onto a
neighborhood of $(0,0)$, and hence, ${g}$ maps such neighborhood
$U$ onto $\mathbb{P}^1$. It follows that $D$ must contains a
horizontal component of Type (IIb) of $G$. The conclusions now are
clear.

c) Let $l$ be a dicritical component of $F$, $(p_l,q_l)\neq
const.$ . By definitions, $l$ is either a horizontal component of
$G$ if $(p_l:q_l)\neq const.$ or a component of a fiber of $g$.
Obviously, in the late case $f(l)\cap \mathbb{C}^2$ is a line
passing through $(0,0)$.

\end{proof}

\vskip 0,5 cm \noindent{\bf 3.} Now, we are ready to prove Theorem
\ref{Main}.

\begin{proof}[Proof of Theorem \ref{Main}]

Let $F=(P,Q)$ be a given polynomial map $\mathbb{C}^2$ with finite
fibres satisfying that all of the curves $aP+bQ=0$, $(a:b)\in
\mathbb{P}^1$, are irreducible and rational. The implication
$(c)\Rightarrow (a;b)$ is trivial. We need to prove only
$(a)\Rightarrow (c)$ and $(b)\Rightarrow (c)$. We will use same
constructions and notations presented for $F=(P,Q)$ in the
previous sections.

First, by assumptions we can apply Lemma \ref{Lem1} to see that
$G$ has exactly two horizontal components,
$$h_G=h_\infty+\sum_{b\in B}h_b=2.\eqno(8)$$
Since $h_\infty >0$  and $h_b>0$ for $b\in B$ by Lemma \ref{Lem2},
from (8) it follows that either
\begin{enumerate}
\item [i)]$h_\infty =2$ and $B=\emptyset$,
 or
\item[ii)]$h_\infty =1$, $B$ consists of an unique point, say
$B=\{ b\}$, and $h_b=1$.
\end{enumerate}

$(a)\Rightarrow (c)$. Assume that $(0,0)$ is a regular value of
$F$, i.e $F^{-1}(0,0)$ is non-empty and does not contain singular
points of $F$. So, we drop into Situation (ii): $h_\infty=1$,
$B=\{b\}$ and $h_b=1$. Then, by Lemma \ref{Lem2} (a) the unique
horizontal component of $G$ in $\mathcal{D}_\infty$ must be in
Type (IIa). It follows that $F$ does not have  dicritical
component, $A_F=\emptyset$. This means that $F$ is a proper map of
$\mathbb{C}^2$, or equivalent $A_F=\emptyset$. Then, by (4) the
geometric degree $\deg_{geo.}F$ of $F$ is equal to the number of
solutions of the equation $F(x,y)=0$, counted with multiplicity.
But, this equation accepts $b$ as an unique solution and $b$ is
not singular point of $F$. Thus,  $\deg_{geo.}F=1$ and hence $F$
is injective. Then, by the well-known fact (see \cite{Essen-book})
that polynomial injections of $\mathbb{C}^n$ are automorphisms the
map $F$ must be invertible.

$(b)\Rightarrow (c)$.  Assume $\det DF\equiv const. \neq 0$. If
$F^{-1}(0,0)\neq \emptyset$, the value $(0,0)$ then is a regular
value of $F$ and we are done by the previous part. Assume the
contrary that $F^{-1}(0,0)=\emptyset$. Then, we drop into
Situation (i): $h_\infty=2$ and $B=\emptyset$. In this case, by
definitions $(0,0)$ is a non-proper value of $F$, $(0,0)\in A_F$.
Therefore, by Lemma \ref{Lem2} (a) and (b) $G$ has  exactly two
horizontal components, one is of Type (IIa) and one is of Type
(IIb). In particular, none of such horizontal components can be a
dicritical component of $F$. Hence, by Lemma \ref{Lem2} (c) $A_F$
must be composed of some lines passing through $(0,0)$. This
contradicts to Theorem 4. Thus, $F$ is invertible.

\end{proof}

\bigskip

\noindent{\it Acknowledgments}:  The author wishes to thank Prof.
Mutsuo Oka  for his helps and useful discussions when he visit
Tokyo University of Sciences in 2010.

\noindent INSTITUTE OF MATHEMATICS, 18 HOANG QUOC VIET, 10307,
HANOI, VIETNAM. E-mail: nvchau@math.ac.vn .

\end{document}